\newtheorem{theorem}{Theorem}
\newtheorem{lemma}[theorem]{Lemma}
\newtheorem{claim}[theorem]{Claim}
\newcommand{\KL}{\operatorname{KL}} % KL divergence - general
\newcommand{\rep}[1]{\ensuremath ^{(#1)}} % replace
\renewcommand{\P}{\operatorname{\mathbb{P}}} % probability
\newcommand{\E}{\operatorname{\mathbb{E}}} % expectation
\newcommand{\R}{\mathbb{R}}  % reals
\newcommand*\diff{\mathop{}\!\mathrm{d}}
\newcommand*{\quotientspace}[2]% quotient space displayed
{\ensuremath{\raisebox{.25ex}{\ensuremath{#1}} \hspace{-.35ex} \raisebox{-.25ex}{/} 
\hspace{-.05ex} \raisebox{-.85ex}{\ensuremath{#2}}} \hspace{.2ex} }
\newcommand*{\textquotientspace}[2]% quotient space inside text
{\ensuremath{\raisebox{.05ex}{\ensuremath{#1}} \hspace{-.30ex} \raisebox{-.15ex}{/} 
\hspace{.10ex} \raisebox{-.55ex}{\ensuremath{#2}}}}
\newcommand{\upsum}% upper sum sign 
{\ensuremath \mkern5.4mu \overline{\vphantom{S}\mkern8mu} \mkern-11mu S}
\newcommand{\lowsum}% lower sum sign 
{\ensuremath \mkern3.4mu \underline{\vphantom{S}\mkern8mu} \mkern-9mu S}
\newcommand{\upint}% upper integral sign 
{\ensuremath \mkern10.4mu \overline{\vphantom{\int}\mkern7mu} \mkern-21mu\int}
\newcommand{\upintwdomain}[1]% upper integral sign w/domain
{\ensuremath \mkern10.4mu \overline{\vphantom{\int}\mkern7mu} \mkern-21mu\int_{#1}}
\newcommand{\textupint}% upper integral sign inside text
{\textstyle \mkern5mu \overline{\vphantom{\int}\mkern6mu} \mkern-14mu\intop\mkern0mu}
\newcommand{\textupintwdomain}[1]% upper integral sign inside text w/domain
{\textstyle \mkern5mu \overline{\vphantom{\int}\mkern6mu} \mkern-14mu\intop_{#1}}
\newcommand{\lowint}% lower integral sign
{\ensuremath \underline{\vphantom{\int}\mkern7mu} \mkern-9mu\int}
\newcommand{\lowintwdomain}[1]% lower integral sign w/domain
{\ensuremath \underline{\vphantom{\int}\mkern7mu} \mkern-9mu\int_{#1}}
\newcommand{\textlowint}% lower integral sign inside text
{\textstyle \mkern3mu \underline{\vphantom{\int}\mkern6mu} \mkern-8.5mu\intop}
\newcommand{\textlowintwdomain}[1]% lower integral sign inside text w/domain
{\textstyle \mkern3mu \underline{\vphantom{\int}\mkern6mu} \mkern-8.5mu\intop_{#1}}
\def\ddefloop#1{\ifx\ddefloop#1\else\ddef{#1}\expandafter\ddefloop\fi}
\def\ddef#1{\expandafter\def\csname c#1\endcsname{\ensuremath{\mathcal{#1}}}}
\def\ddef#1{\expandafter\def\csname bb#1\endcsname{\ensuremath{\mathbb{#1}}}}
\def\ddef#1{\expandafter\def\csname fr#1\endcsname{\ensuremath{\mathfrak{#1}}}}
\def\ddef#1{\expandafter\def\csname ss#1\endcsname{\ensuremath{\mathsf{#1}}}}
\def\ddef#1{\expandafter\def\csname bf#1\endcsname{\ensuremath{\mathbf{#1}}}}
\def\ddef#1{\expandafter\def\csname v#1\endcsname{\ensuremath{\boldsymbol{#1}}}}
\def\ddef#1{\expandafter\def\csname v#1\endcsname{\ensuremath{\boldsymbol{\csname #1\endcsname}}}}
\newcommand{\VES}{V^{\text{\scshape{es}}}}
\newcommand{\VKS}{V^{\text{\scshape{ks}}}}
\newcommand{\pr}[1]{\left( #1 \right)}
\newcommand{\br}[1]{\left[ #1 \right]}
\newcommand{\cbr}[1]{\left\{ #1 \right\}}
\title{A note on %a concentration inequality of 
a confidence bound of \\
Kuzborskij and Szepesv{\'a}ri}
\author{
   Omar Rivasplata \\ 
  DeepMind \& University College London  
}
\date{December 25, 2020}
\begin{document}

\maketitle

\begin{abstract}
In an interesting recent work, Kuzborskij and Szepesv{\'a}ri
derived a confidence bound for functions of independent random variables, which is based on an inequality that relates concentration to squared perturbations of the chosen function.
Kuzborskij and Szepesv{\'a}ri also established the PAC-Bayes-ification of their confidence bound.
Two important aspects of their work are that the random variables could be of unbounded range, and not necessarily of an identical distribution.
The purpose of this note is to advertise/discuss these interesting results,
with streamlined proofs. 
This expository note is written for persons who, metaphorically speaking, enjoy the `featured movie' but prefer to skip the preview sequence.
\end{abstract}

\section{Introduction}

In an interesting recent work, \citeauthor{kuzborskij2019efron} 
derived a confidence bound for the random variable
\[
  \Delta = f(S) - \E[f(S)]%\,,
\]
where $S = (Z_1, Z_2, \ldots, Z_n)$ is a size-$n$ random sample composed of independent $\cZ$-valued random elements $Z_i$, and  $f : \cZ^n \to \R$ is a measurable function.
Notice, however, that the components %of $S$ 
are not required to be identically distributed: each $Z_i$ may be distributed according to a different%
\footnote{$\cM_1(\cA,\Sigma_{\cA})$ denotes the family of probability measures defined on a measurable space $(\cA,\Sigma_{\cA})$. When $\Sigma_{\cA}$ is clear from the context, we write simply $\cM_1(\cA)$ for simplicity.} 
$\mu_i \in \cM_1(\cZ)$.
Accordingly, the distribution of the size-$n$ random sample $S$ is
$P_n = \mu_1 \otimes\cdots\otimes \mu_n$.

Their confidence bound % for the gap $\Delta$ 
is based on an estimator of the variance of $f(S)$.
Recall that McDiarmid's inequality, which is based on the bounded differences property, relates concentration of $\Delta$ around zero (its mean) to 
the sensitivity of $f$ to coordinatewise perturbations (``first-order'').
By contrast, the bound of \citeauthor{kuzborskij2019efron} relates concentration to squared perturbations (``second-order''), which leads to an inequality based on a variance estimator. 
The latter has a resemblance with a well-known estimator, recalled next.

\paragraph{The variance estimator used in the Efron-Stein inequality.}
This is defined as follows:
\begin{equation}
  \label{eq:VES}
  \VES = \sum_{k=1}^n \E\left[(f(S) - f(S\rep{k}))_+^2 \,\middle|\, S \right]\,,
\end{equation}
where $(s)_+ = \max\{0,s\}$ is the positive part, and the notation $S\rep{k}$ indicates that the $k$th element of $S$ is replaced with $Z_k'$, where $S'= (Z_1', Z_2', \ldots, Z_n')$ is an independent copy of $S = (Z_1, Z_2, \ldots, Z_n)$.
Further details about this estimator, with context and references, 
can be found in \citet{boucheron2013book}.

Problem: In order to prove a confidence bound for $\Delta$ based on $\VES$, one needs 
% tail probability estimates for some functions of the pair $(\Delta,\VES)$, which in turn requires 
\emph{a priori} assumptions on the moments of $\VES$. 
To avoid this limitation, \citeauthor{kuzborskij2019efron} used a modified variance estimator.

\paragraph{The variance estimator used in the Kuzborskij-Szepesv{\'a}ri inequality.}
This is defined as follows:
\begin{equation}
  \label{eq:VKS}
  \VKS = \sum_{k=1}^n \E\left[(f(S) - f(S\rep{k}))^2 \,\middle|\, Z_1, \ldots, Z_k \right]\,.
\end{equation}
\citeauthor{kuzborskij2019efron} called it a ``semi-empirical'' estimator, because of its dependence on both the sample and the distribution of the sample. % Notice the resemblance to $\VES$.

% Another property of $\VKS$ is that it is \emph{asymmetric} with respect to the sample $S$---in general $\VKS$ depends on the order of elements in the list $(Z_1, Z_2, \ldots, Z_n)$, due to the conditional expectation.
% However, as discussed by \citeauthor{kuzborskij2019efron} (see their applications section), this does not affect sums and weakly affects normalized sums.

The main result of \citeauthor{kuzborskij2019efron} is the following high-confidence bound:
For any $y > 0$ and $x \geq 1$,
with probability at least $1-e^{-x}$ one has
\begin{equation}
  \label{eq:intro:es_1}
  |\Delta| \leq \sqrt{2 (\VKS + y) \left[ 1 + \frac{1}{2} \log\pr{1 + \frac{\VKS}{y}} \right] x}\,.
\end{equation}

\paragraph{Remark:}
Inequality~\eqref{eq:intro:es_1} does not require boundedness of random variables $Z_i$, nor of the function $f$; 
the only crucial assumption is independence of elements in the sample $S$.
Observe that inequality~\eqref{eq:intro:es_1} basically depends on $\VKS$ and a positive free parameter $y$, which must be selected by the user. 
For instance, choosing $y = 1/n^2$ gives: 
For any $x \geq 1$, with probability at least $1-e^{-x}$ one has
\[
  |\Delta| 
  \leq \sqrt{2 (\VKS + 1/n^2) [1 + \tfrac{1}{2}\log(1 + n^2 \VKS) ] x}\,.
\]
Paraphrasing \citeauthor{kuzborskij2019efron}:
With this particular choice of $y$, the resulting inequality shows a Bernstein-type behavior, in the sense that the upper-bound is dominated by the lower-order term whenever $\VKS$ %(which is a variance proxy) 
is small enough;
and the price for such a simple choice of $y$ is in the logarithmic term.
%
% In general, one can achieve a sharper bound if the range of $\VKS$ is known (or can be guessed), in which case one can take a union bound over some discretized range of $y$ values, and select $y$ minimizing the bound.

\paragraph{Remark:}
In addition to inequality~\eqref{eq:intro:es_1}, \citeauthor{kuzborskij2019efron} showed a bound that does not involve $y$ and, in particular, is scale-free:
For any $x > 0$, with probability at least $1-\sqrt{2} e^{-x}$ one has
$|\Delta| \leq 2 \sqrt{(\VKS + \E[\VKS]) x}$.

\bigskip

The remaining of this note's content is as follows.
The confidence bound of \citeauthor{kuzborskij2019efron} is presented and proved in \cref{sec:main}; and the `PAC-Bayes-ified' version of this bound is presented and proved in \cref{sec:pac_bayes}.

\section{The main result and its proof}
\label{sec:main}

\begin{theorem}
  \label{thm:main}
  Let $f : \cZ^n \to \R$ be a measurable function,
  let $\Delta = f(S) - \E[f(S)]$ be the random gap with $S$ randomly chosen from a distribution $P_n \in \cM_1(\cZ^n)$,
  and let $V = \VKS$ be the variance estimator defined in \cref{eq:VKS}. \\[1mm]
  (i) For any $x>0$, 
  \[
  \P\left(|\Delta| > 2 \sqrt{(V + \E[V]) x}\right) 
  \leq \sqrt{2} e^{-x}\,.
\]
  (ii) For any $y > 0$, and any $x \geq 1$,
  \[
  \P\left(
  |\Delta| > \sqrt{2 (V + y) [1 + \tfrac{1}{2}\log(1 + V/y) ] x}
  \right) 
  \leq e^{-x}\,.
  \]
\end{theorem}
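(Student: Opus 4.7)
The plan is to derive both statements from a single exponential moment inequality, namely
\[
  \E\!\left[\exp\!\left(\lambda\Delta - \tfrac{\lambda^2}{2} V\right)\right] \leq 1
  \qquad \text{for every } \lambda\in\R,
\]
and then read off the tail bounds by integrating against a suitable Gaussian prior on $\lambda$ and applying Markov. To set this up, I introduce the filtration $\cF_k = \sigma(Z_1,\ldots,Z_k)$ and the Doob martingale $g_k := \E[f(S)\mid\cF_k]$, so that $\Delta = \sum_{k=1}^n D_k$ with $D_k := g_k - g_{k-1}$. Writing $X_k := f(S) - f(S\rep{k})$ and using that $Z_k'$ is independent of $\cF_k$ with $Z_k'$ identically distributed to $Z_k$, one verifies the clean identities $D_k = \E[X_k\mid\cF_k]$ and $W_k := \E[X_k^2\mid\cF_k]$, so that $V = \sum_k W_k$.

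The heart of the argument is to show the per-step bound
\[
  \E\!\left[\exp\!\left(\lambda D_k - \tfrac{\lambda^2}{2} W_k\right) \,\Big|\, \cF_{k-1}\right] \leq 1,
\]
which makes $M_k := \exp(\lambda \sum_{j\leq k} D_j - \tfrac{\lambda^2}{2}\sum_{j\leq k} W_j)$ a supermartingale and yields the global bound by the tower rule. I would establish this per-step inequality by combining three ingredients: (a)~the function $\phi(u,v) := e^{\lambda u - \lambda^2 v/2}$ is jointly convex (one checks the Hessian is a rank-one positive semidefinite outer product), so Jensen's inequality inside $\E[\cdot\mid\cF_k]$ applied to the pair $(X_k, X_k^2)$ gives $\exp(\lambda D_k - \tfrac{\lambda^2}{2}W_k) \leq \E[\exp(\lambda X_k - \tfrac{\lambda^2}{2}X_k^2)\mid\cF_k]$; (b)~conditionally on $\cF_{k-1}$, swapping $Z_k \leftrightarrow Z_k'$ preserves the joint distribution and flips the sign of $X_k$ while keeping $X_k^2$ fixed, so $\E[e^{\lambda X_k - \lambda^2 X_k^2/2}\mid\cF_{k-1}] = \E[\cosh(\lambda X_k)\,e^{-\lambda^2 X_k^2/2}\mid\cF_{k-1}]$; (c)~the elementary bound $\cosh(u)\leq e^{u^2/2}$ collapses the last quantity to at most~$1$.

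With the global MGF inequality in hand, part (ii) is obtained by integrating against the Gaussian prior $\pi=\cN(0, 1/y)$ on $\lambda$ and applying Fubini. Completing the square inside the exponent yields
\[
  \E\!\left[\frac{1}{\sqrt{1+V/y}}\,\exp\!\left(\frac{\Delta^2}{2(V+y)}\right)\right] \leq 1,
\]
whereupon Markov's inequality, together with the hypothesis $x \geq 1$ (used to absorb the constant via $x + \tfrac{1}{2}\log(1+V/y) \leq [1 + \tfrac{1}{2}\log(1+V/y)]\,x$), gives the stated bound for $\Delta$; the same argument with $-\lambda$ gives the bound for $-\Delta$, and the integral is symmetric so no extra constant is introduced. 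Part (i), which is scale-free and free of the logarithmic factor, is obtained from a variant of the same argument with the deterministic scale $y = \E[V]$, followed by a Cauchy--Schwarz / Markov step that trades away the $\tfrac{1}{2}\log(1+V/\E[V])$ term at the cost of a $\sqrt{2}$ factor in the failure probability, with a union bound over $\pm\Delta$ converting $\Delta$ control into $|\Delta|$ control.

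The main obstacle is the MGF inequality of the second step, which must be established in full generality with no boundedness or sub-Gaussianity assumption. Classical martingale concentration tools (Hoeffding--Azuma, Freedman, Bennett) all require such hypotheses on the increments $D_k$, which are unavailable here since $f$ and the $Z_i$ may be arbitrarily heavy-tailed. The crucial piece of structure that rescues the argument is the joint convexity of $\phi(u,v) = e^{\lambda u - \lambda^2 v/2}$: it allows one to pass control from the conditional expectations $(D_k, W_k)$ to the raw perturbation $(X_k, X_k^2)$, whereupon the swap-symmetry of $X_k$ and the $\cosh$-bound cancel the extra $X_k^2$ dependence ``for free''. Once this inequality is in place, parts (i) and (ii) are essentially routine Laplace-integration / PAC-Bayes computations.
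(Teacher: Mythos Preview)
Your approach is correct and matches the paper's: establish the per-step supermartingale bound via Jensen and the swap symmetry (the paper phrases the Jensen step as convexity of $\exp$ applied to the linear form $\lambda\Delta_k - \tfrac{\lambda^2}{2}\Delta_k^2$, which is equivalent to your joint-convexity formulation, and the Rademacher symmetrization there is exactly your $\cosh(u)\le e^{u^2/2}$), then read off the tail bounds via the Gaussian-mixture/Markov machinery of de~la~Pe\~na et al. One minor correction for part~(i): no union bound over $\pm\Delta$ is needed, since the mixture already produces $\Delta^2$ in the exponent; the $\sqrt{2}$ arises solely from the Cauchy--Schwarz handling of the prefactor $1/\sqrt{1+V/\E[V]}$, as you otherwise correctly indicate.
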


To discuss the proof of \cref{thm:main}, the following definition will be convenient: A pair of random variables $(A,B)$ is called a \textbf{canonical pair} if $B \ge 0$ and 
\begin{align}
    \label{eq:canonical_pair}
	\sup_{\lambda\in \R} 
	\E\br{ \exp\pr{\lambda A - \frac{\lambda^2}{2} B^2 }} 
	\leq 1\,.
\end{align}
See \citet[Section 10.2]{delapena2009} for further discussion on this condition, and its connection with the so-called self-normalized processes.

A key step of the proof of \cref{thm:main} consists of establishing that $(\Delta,\sqrt{V})$ is a canonical pair. We state this as a lemma for convenient reference:
\begin{lemma}
  \label[lemma]{lem:canonical}
  $(\Delta, \sqrt{V})$ is a canonical pair.
\end{lemma}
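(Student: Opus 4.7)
The plan is to establish the canonical-pair property via a supermartingale argument applied to the Doob-type decomposition of $\Delta$. Set $\cF_0 = \{\emptyset, \Omega\}$, $\cF_k = \sigma(Z_1, \ldots, Z_k)$, and write
\[
  \Delta_k = \E[f(S) \mid \cF_k] - \E[f(S) \mid \cF_{k-1}],
  \qquad
  V_k = \E\br{(f(S) - f(S\rep{k}))^2 \,\middle|\, \cF_k},
\]
so that $\Delta = \sum_{k=1}^n \Delta_k$ and $V = \sum_{k=1}^n V_k$. The target is to show that for every $\lambda \in \R$ and every $k$,
\[
  \E\br{\exp\!\pr{\lambda \Delta_k - \tfrac{\lambda^2}{2} V_k} \,\middle|\, \cF_{k-1}} \leq 1.
\]
Granted this, the process $M_k = \exp\!\big(\lambda \sum_{j \leq k} \Delta_j - \tfrac{\lambda^2}{2} \sum_{j \leq k} V_j\big)$ is an $(\cF_k)$-supermartingale with $M_0 = 1$, so $\E[\exp(\lambda \Delta - \tfrac{\lambda^2}{2} V)] \leq 1$ for every $\lambda$, which is the canonical-pair inequality.

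The heart of the argument uses the auxiliary variable $U_k := f(S) - f(S\rep{k})$. Because $f(S\rep{k})$ does not depend on $Z_k$, and $Z_k'$ is independent of $\cF_k$, one checks $\E[f(S\rep{k}) \mid \cF_k] = \E[f(S) \mid \cF_{k-1}]$, which gives
\[
  \E[U_k \mid \cF_k] = \Delta_k, \qquad \E[U_k^2 \mid \cF_k] = V_k.
\]
By linearity, $\lambda \Delta_k - \tfrac{\lambda^2}{2} V_k = \E\!\big[\lambda U_k - \tfrac{\lambda^2}{2} U_k^2 \,\big|\, \cF_k\big]$, so Jensen's inequality for the convex exponential yields
\[
  \exp\!\pr{\lambda \Delta_k - \tfrac{\lambda^2}{2} V_k} \leq \E\br{\exp\!\pr{\lambda U_k - \tfrac{\lambda^2}{2} U_k^2} \,\middle|\, \cF_k}.
\]
Taking $\E[\,\cdot \mid \cF_{k-1}]$ on both sides reduces matters to proving
\[
  \E\br{\exp\!\pr{\lambda U_k - \tfrac{\lambda^2}{2} U_k^2} \,\middle|\, \cF_{k-1}} \leq 1.
\]

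The last step uses the symmetry of $U_k$ given $\cF_{k-1}$. Indeed, conditional on $\cF_{k-1}$, the pair $(Z_k, Z_k')$ is iid (both drawn from $\mu_k$) and independent of $(Z_{k+1}, \ldots, Z_n)$; swapping $Z_k \leftrightarrow Z_k'$ preserves the conditional law while sending $U_k \mapsto -U_k$, so $U_k \stackrel{d}{=} -U_k$ given $\cF_{k-1}$. Averaging the target with its mirror image (under $U_k \mapsto -U_k$, which leaves $U_k^2$ fixed) and applying the elementary inequality $\cosh(t) \leq \exp(t^2/2)$ then gives
\[
  \E\br{\exp\!\pr{\lambda U_k - \tfrac{\lambda^2}{2} U_k^2} \,\middle|\, \cF_{k-1}}
  = \E\br{\exp\!\pr{-\tfrac{\lambda^2}{2} U_k^2} \cosh(\lambda U_k) \,\middle|\, \cF_{k-1}}
  \leq 1,
\]
closing the argument.

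The main subtlety will be spotting the Jensen bridge $\lambda \Delta_k - \tfrac{\lambda^2}{2} V_k = \E[\lambda U_k - \tfrac{\lambda^2}{2} U_k^2 \mid \cF_k]$: this identity is what reconciles the ``second-order'' estimator $V_k$ (the conditional second moment of $U_k$ given $\cF_k$, \emph{not} the conditional variance of $\Delta_k$) with the exponent one needs to control, after which the symmetry of $U_k$ given $\cF_{k-1}$ and the $\cosh$ bound finish the calculation. Everything else---the supermartingale telescoping and the symmetrization---is routine once this identity is in hand.
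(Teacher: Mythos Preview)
Your proof is correct and follows essentially the same route as the paper's: the martingale-difference decomposition $\Delta=\sum_k \Delta_k$, $V=\sum_k V_k$, the Jensen bridge $\lambda\Delta_k-\tfrac{\lambda^2}{2}V_k=\E[\lambda U_k-\tfrac{\lambda^2}{2}U_k^2\mid\cF_k]$, and then the symmetry of $U_k=f(S)-f(S\rep{k})$ under swapping $Z_k\leftrightarrow Z_k'$. The only cosmetic difference is that the paper encodes the symmetrization via an auxiliary Rademacher sign $\varepsilon$ and invokes ``$x\varepsilon$ is subgaussian'', whereas you average the two mirror images directly and apply $\cosh(t)\le e^{t^2/2}$; these are the same inequality in two notations.
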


The rest of the proof of \cref{thm:main} relies on following technical result, which essentially gives subgaussian tail probabilities for some functions of a canonical pair
(cf. \citet[Theorem 12.4 \& Corollary 12.5]{delapena2009}): 
\begin{lemma}%[\citet{delapena2009}, Theorem 12.4]
  \label[lemma]{lem:self_norm_concentration}
  Suppose $(A,B)$ is a canonical pair. Then: \\[1mm]
  (i) For any $t > 0$,
  \[
    \P\pr{\frac{|A|}{\sqrt{B^2 + (\E[B])^2}} \geq t} 
    \leq \sqrt{2} e^{-\frac{t^2}{4}}\,.
  \]
  (ii) For any $y > 0$ and $t \geq \sqrt{2}$,
  \[
    \P\pr{\frac{|A|}{
    \sqrt{(B^2 + y) \br{1 + \frac{1}{2} \log\pr{1 + \frac{B^2}{y}}}}
    } \geq t} 
    \leq e^{-\frac{t^2}{2}}\,.
  \]
\end{lemma}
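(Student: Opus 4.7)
The plan is to base everything on the \emph{method of mixtures}. For any probability density $\pi(\lambda) d\lambda$ on $\R$, Fubini together with the canonical pair condition gives
\[
  \E\left[ \int_{\R} \pi(\lambda) \exp\pr{\lambda A - \tfrac{\lambda^2}{2} B^2} d\lambda \right] \leq 1.
\]
I would plug in the centered Gaussian density with variance $1/c^2$ and complete the square inside the integrand. This yields what I will use as the baseline inequality $(\star)$: for every deterministic $c > 0$,
\[
  \E\left[ \frac{c}{\sqrt{B^2 + c^2}} \exp\pr{ \frac{A^2}{2(B^2+c^2)} } \right] \leq 1.
\]

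For part~(i), I would specialize $(\star)$ to $c = \E[B]$ (the case $\E[B]=0$ is degenerate: it forces $B\equiv 0$, and then the canonical condition forces $A=0$ a.s., so the bound is trivial). The aim is to upgrade $(\star)$ to a clean moment bound on $\exp\pr{A^2/(4(B^2+(\E B)^2))}$ so that Markov finishes the job. To strip off the random weight $c/\sqrt{B^2+c^2}$ appearing in $(\star)$, I plan to apply Cauchy--Schwarz after splitting the exponent in half, and then to control the resulting ``correction'' factor $\E[\sqrt{B^2+(\E B)^2}/\E B]$ by the elementary inequality $\sqrt{B^2+(\E B)^2} \leq B + \E[B]$, which gives $2$. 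Taking a square root is where the $\sqrt{2}$ out front of the tail bound comes from.

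For part~(ii), I would instead set $c^2 = y$ in $(\star)$ and rearrange directly into the form $\E[\exp(W)] \leq 1$ where
\[
  W = \frac{A^2}{2(B^2 + y)} - \tfrac{1}{2} \log\pr{1 + \tfrac{B^2}{y}}.
\]
The logarithm in the target bound arises precisely from the Gaussian normalization $\sqrt{y}/\sqrt{B^2+y}$. Markov then gives $\P(W \geq s) \leq e^{-s}$ for every $s > 0$. The remaining step is the algebraic check that the event $\{|A|^2 \geq t^2 (B^2+y) [1 + \tfrac{1}{2}\log(1 + B^2/y)]\}$ is contained in $\{W \geq t^2/2\}$; solving out, this reduces to verifying $\tfrac{t^2-2}{4} \log(1 + B^2/y) \geq 0$, which uses both $t \geq \sqrt{2}$ and $B^2 \geq 0$.

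The main obstacle in part~(i) is the Cauchy--Schwarz trick for replacing the random weight $c/\sqrt{B^2+c^2}$ in $(\star)$ by a deterministic constant; the delicate point in part~(ii) is that the threshold $t \geq \sqrt{2}$ is not cosmetic but is exactly what is needed so that the log term on the target side can be absorbed into the $-\tfrac12 \log(1+B^2/y)$ already sitting inside $W$.
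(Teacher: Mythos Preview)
Your argument is correct. The paper does not actually prove \cref{lem:self_norm_concentration}; it simply cites \citet[Theorem~12.4 \& Corollary~12.5]{delapena2009}. Your method-of-mixtures derivation of the baseline inequality $(\star)$, the Cauchy--Schwarz step (with the $\sqrt{B^2+(\E B)^2}\le B+\E B$ bound giving the factor~$2$) for part~(i), and the event-inclusion algebra using $t\ge\sqrt{2}$ for part~(ii) are exactly the de~la~Pe\~na--Lai--Shao techniques the paper points to, and indeed are the same moves the paper itself carries out in the PAC-Bayes analogues (proof of \cref{lem:pac_bayes_mgf} and of \cref{thm:pac_bayes}(ii)).
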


The proof of \cref{thm:main} is then merely by combining \cref{lem:canonical} and \cref{lem:self_norm_concentration}.
Hence, %to complete the proof of \cref{thm:main}, 
it remains to prove \cref{lem:canonical}.
This uses the martingale method, which is at the core of the proofs of McDiarmid's/Azuma-Hoeffding's inequalities.

\begin{proof}[Proof of \cref{lem:canonical}]
Let $\E_k[\cdot]$ stand for $\E[\,\cdot\,|\, Z_1,\dots,Z_k]$.
Using the martingale difference decomposition,
the gap $\Delta = f(S) - \E[f(S)]$ can be written as
\begin{align*}
\Delta = \sum_{k=1}^n D_k
\end{align*}
where $D_k = \E_k[f(S)] - \E_{k-1}[f(S)]$. 
Notice that $D_k = \E_k[f(S)-f(S\rep{k})]$, which follows from the elementary identity 
$\E_{k-1}[f(S)] =\E_k[f(S\rep{k})]$.

The variance estimator $V = \VKS$ (cf. \cref{eq:VKS}) can be rewritten as
\[
V = \sum_{k=1}^n V_k
\]
where $V_k = \E_k\br{ \pr{f(S) - f(S\rep{k})}^2 }$.
This is just a convenient notation.

Assume for now that for every $k\in [n]$ the following holds:
\begin{align}
  \E_{k-1}\br{
    \exp\pr{ \lambda D_k - \frac{\lambda^2}{2} V_k }
    } 
  \leq 1\,.
  \label{eq:dkbkcp}
\end{align}
Then, using a recursive argument and \cref{eq:dkbkcp}, we get
\begin{align*}
  &\E\br{\exp\pr{\lambda \Delta - \frac{\lambda^2}{2} V}}
%   = \E\br{\exp\pr{
%   \lambda \sum_{k=1}^n D_k - \frac{\lambda^2}{2} \sum_{k=1}^n V_k
%     }}
%   \\
  =
   \E\br{\prod_{k=1}^n \exp\pr{
     \lambda D_k - \frac{\lambda^2}{2} V_k
     }}
  \\
  &\hspace*{7mm} =
  \E\br{ 
  	\underbrace{
	\E_{n-1}\br{
    \exp\pr{ \lambda D_n - \frac{\lambda^2}{2} V_n
    }}}_{\le 1}
    \prod_{k=1}^{n-1}
    \exp\pr{ \lambda D_k - \frac{\lambda^2}{2} V_k}
    } 
    \\
  &\hspace*{7mm} \le
  \E\br{ 
  	\underbrace{\E_{n-2}\br{
    \exp\pr{ \lambda D_{n-1} - \frac{\lambda^2}{2} V_{n-1}}
    }}_{\le 1}
    \prod_{k=1}^{n-2}
    \exp\pr{ \lambda D_k - \frac{\lambda^2}{2} V_k}
    }
     \\
    &\hspace*{7mm} \le \dots \le 1\,.
\end{align*}

Thus, it remains to prove \cref{eq:dkbkcp}.
Fix $k\in [n]$ and let $\varepsilon \in \{-1,+1\}$
be a random variable independent of $S,S'$
such that 
$\P(\varepsilon = +1) = \P(\varepsilon = -1) = 1/2$.
Let $\Delta_k = f(S) - f(S\rep{k})$.
Notice that 
$\lambda D_k - \frac{\lambda^2}{2} V_k = \E_k[\lambda \Delta_k - \frac{\lambda^2}{2} \Delta_k^2 ]$, 
and by Jensen's inequality 
\[
\exp\pr{\E_k\br{\lambda \Delta_k - \frac{\lambda^2}{2} \Delta_k^2 }}
\leq \E_k\br{\exp\pr{\lambda \Delta_k - \frac{\lambda^2}{2} \Delta_k^2}}\,.
\]
Let $\E_{-k}[\cdot]$ denote conditioning on $S$ without $Z_k$.
Then we have
\begin{align*}
\MoveEqLeft
  \E_{k-1}\br{
    \exp\pr{ \lambda D_k - \frac{\lambda^2}{2} V_k } 
    }
   \leq
  \E_{k-1}\br{
    \exp\pr{ \lambda \Delta_k - \frac{\lambda^2}{2} \Delta_k^2 }
  } \nonumber\\
  & =
    \E_{k-1}\br{
    \E_{-k}\E\br{
    \exp\pr{ \varepsilon \lambda  \Delta_k  - \frac{\lambda^2}{2} \pr{\varepsilon  \Delta_k }^2 }
    \,\Big|\, S,S' }
    }\,.
    % \label{eq:replace_one_symmetrization}
\end{align*}
The last equality follows from the assumption on the distributions, that is,
given $Z_1,\dots,Z_{k-1},Z_{k+1},\dots,Z_n$, the random variables
 $Z_k$ and $Z_k'$ are identically distributed, hence so are 
 $\Delta_k$ and $-\Delta_k$. 
 Since $x \varepsilon$ is subgaussian (for any $x \in \R$),
the innermost expectation %in~\cref{eq:replace_one_symmetrization} 
in the last display is upper-bounded by one. 
% This finishes the proof of \cref{eq:dkbkcp} and of the lemma.
%
\end{proof}

\paragraph{Remark:} 
The proof makes it clear that this inequality holds in the slightly more general setting in which $f : \cZ_1\times\cdots\times\cZ_n \to \R$ and $S = (Z_1,\ldots,Z_n)$ has independent components, where each $Z_i$ is a $\cZ_i$-valued random variable with distribution $\mu_i \in \cM_1(\cZ_i)$.

\section{PAC-Bayes-ification}
\label{sec:pac_bayes}

We adapt the notation for $\Delta$ and $V=\VKS$ to make explicit their dependence on $f$, and see them as being defined over $f$'s from some function class $\cF$:
\begin{align}
  \Delta(f) &= f(S) - \E[f(S)]\,, \tag{1'} \\
  V(f) &= \sum_{k=1}^n \E_k\left[(f(S) - f(S\rep{k}))^2 \right]\,. \tag{2'}
\end{align}
It might be convenient to make explicit the dependence of $\Delta$ and $V$ on the sample $S$ as well; 
to do so, we may write $\Delta_S(f)$ and $V_S(f)$. 
Recall that the distribution of the (size-$n$) random sample $S$ is $P_n = \mu_1 \otimes\cdots\otimes \mu_n$.
Notice that for a fixed nonrandom $s=(z_1,\ldots,z_n) \in \cZ^n$,
the gap is
\[
\Delta_s(f) = f(s) - \int_{\cZ^n} f(s') P_n(ds')\,.
\]
The expression for $V_s(f)$ is longer to write, but easy to imagine.
The point is that $\Delta$ and $V$ are real-valued functions defined over $\cZ^n \times \cF$.

Let $\cF = \{ f_\theta \}_{\theta\in\Theta}$ be a parametric family of functions $f_\theta : \cZ^n \to \R$. 
For each $\theta \in \Theta$, define
$\Delta_S(\theta)$ and $V_S(\theta)$, the gap and the variance estimator for $f_\theta(S)$. Then $(\Delta_S(\theta),\sqrt{V_S(\theta)})$ is a canonical pair, for each $\theta$, by \cref{lem:canonical}.

Given a probability kernel $Q$ from $\cZ^n$ to $\Theta$ and $s \in \cZ^n$, we write expectations with respect to the distribution $Q_s$ as $Q_s[\Delta_s] = \int_{\Theta} \Delta_s(\theta) Q_s(d\theta)$, and similarly $Q_s[V_s] = \int_{\Theta} V_s(\theta) Q_s(d\theta)$. 
If $S \sim P_n$ is the random sample, then expectations with respect to the random measure $Q_S$ are conditional expectations:
\begin{align*}
    Q_S[\Delta_S] = \E[\Delta_S(\theta) \,|\, S]\,,
    \qquad\text{and}\qquad
    Q_S[V_S] = \E[V_S(\theta) \,|\, S]\,.
\end{align*}
The joint distribution over $\cZ^n\times\Theta$ defined by $P_n$ and the probability kernel $Q$, denoted $P_n \otimes Q$, is so that choosing a random pair $(S,\theta) \sim P_n \otimes Q$ corresponds to choosing $S \sim P_n$ and then choosing $\theta \sim Q_S$. Accordingly, integrals
under $P_n \otimes Q$ correspond to the `total expectation' with respect to the random choice of $S \sim P_n$ and $\theta \sim Q_S$. For instance,
\begin{align*}
    (P_n \otimes Q)[V] 
    = \int_{\cZ^n} \int_{\Theta} V_s(\theta) Q_s(d\theta) P_n(ds)
    = \E[\E[V_S(\theta) \,|\, S]]
    = \E[V_S(\theta)]\,.
\end{align*}
With a slight abuse of notation, we may write $P_n[Q_S[V_S]]$ instead of $(P_n \otimes Q)[V]$.

\pagebreak

The `PAC-Bayes-ification' of \cref{thm:main} is as follows.

\begin{theorem}
  \label{thm:pac_bayes}
Fix an arbitrary `data-free' probability distribution $Q^0$ over $\Theta$,
and an arbitrary probability kernel $Q$ from $\cZ^n$ to $\Theta$.
% 
% Then\\[1mm] 
%   (i) For any $x > 0$,
%   with probability at least $1-2 e^{-x}$ 
%   we have
%   \begin{equation}
%     \label{eq:pac_bayes_mix_3}
%     |\E[\Delta_S(\theta) \,|\, S]|
%     \leq
%     \sqrt{2 (\E[V_S({\theta})] + \E[V_S(\theta) \,|\, S]) \pr{\KL\pr{Q_S \,\|\, Q^0} + 2 x}}\,.
%   \end{equation}
%   %
%   (ii) For all $y > 0$ and $x \geq 2$ with probability at least $1-e^{-x}$ we have
%   \begin{equation}
%     \label{eq:pac_bayes_mix_4}
%     |\E[\Delta_S(\theta) \,|\, S]|
%     \leq
%     \sqrt{
%       2 \pr{\E[V_S(\theta) \,|\, S] + y}
%       \pr{\KL\pr{Q_S \,\|\, Q^0} + x + \frac{x}{2} \log\pr{1 + \frac{\E[V_S(\theta) \,|\, S]}{y} }}
%     }\,.
%   \end{equation}
  %
Then\\[1mm] 
  (i) For any $x > 0$,
  with probability at least $1-2 e^{-x}$ 
  we have
  \begin{equation}
    \label{eq:pac_bayes_mix_3}
    |Q_S[\Delta_S]|
    \leq
    \sqrt{
    2 (\E[V_S({\theta})] + Q_S[V_S]) \pr{\KL\pr{Q_S \,\|\, Q^0} + 2 x}
    }\,.
  \end{equation}
  (ii) For all $y > 0$ and $x \geq 1$, 
  with probability at least $1-e^{-x}$ we have
  \begin{equation}
    \label{eq:pac_bayes_mix_4}
    |Q_S[\Delta_S]|
    \leq
    \sqrt{
      2 \pr{Q_S[V_S] + y}
      \br{\KL\pr{Q_S \,\|\, Q^0} + x + \frac{x}{2} \log\pr{1 + \frac{Q_S[V_S]}{y} }}
    }\,.
  \end{equation}
\end{theorem}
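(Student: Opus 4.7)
The plan is to derive a single master inequality, parametrized by a free scale $y > 0$, from which both parts will follow by specializing $y$. The backbone will combine four standard ingredients: (1) the canonical-pair property of $(\Delta_S(\theta), \sqrt{V_S(\theta)})$ from \cref{lem:canonical}, (2) a symmetric Gaussian mixture over the scalar $\lambda$ that appears in the canonical-pair definition, (3) Markov's inequality applied after taking the $Q^0$-expectation (via Fubini), and (4) the Donsker--Varadhan change-of-measure inequality, together with two applications of Jensen.

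Concretely, I will start from $\E_S[\exp(\lambda \Delta_S(\theta) - \tfrac{\lambda^2}{2} V_S(\theta))] \le 1$ for each fixed $\theta$ and each $\lambda \in \R$, and integrate against a symmetric Gaussian $\lambda \sim N(0, 1/y)$. Completing the square yields, for every $\theta$,
\[
\E_S\!\left[ (1 + V_S(\theta)/y)^{-1/2} \exp\!\left(\tfrac{\Delta_S(\theta)^2}{2(V_S(\theta)+y)}\right) \right] \le 1.
\]
Averaging over $\theta \sim Q^0$ (Fubini) and Markov give that, with probability at least $1 - e^{-x}$, the $Q^0$-average of the bracketed quantity is bounded by $e^x$. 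Donsker--Varadhan against the data-dependent posterior $Q_S$, followed by two applications of Jensen --- joint convexity of $(\delta, v) \mapsto \delta^2/(v+y)$ on $\R \times [0,\infty)$ (a short Hessian check shows $\det H = 0$, $\tr H > 0$), and concavity of $v \mapsto \log(1 + v/y)$ --- will pull $Q_S$ past both nonlinearities to produce
\[
\tfrac{Q_S[\Delta_S]^2}{2(Q_S[V_S]+y)} - \tfrac{1}{2}\log\!\left(1 + \tfrac{Q_S[V_S]}{y}\right) \le \KL(Q_S \| Q^0) + x.
\]

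Rearranging and bounding $\tfrac{1}{2} \le \tfrac{x}{2}$ in the logarithmic term (valid for $x \ge 1$) will immediately give part (ii). For part (i), I will specialize $y = \E[V_S(\theta)]$, which is a valid data-independent constant since $Q^0$ is fixed a priori, and combine with a second event: Markov's inequality applied to the nonnegative random variable $Q_S[V_S]$, whose total expectation equals $\E[V_S(\theta)]$ by construction, gives $Q_S[V_S] \le e^x \E[V_S(\theta)]$ with probability at least $1-e^{-x}$. A union bound makes both events hold with probability at least $1-2e^{-x}$; on their intersection, the residual logarithmic term is at most $\tfrac{1}{2}\log(1+e^x) \le x$ throughout the nonvacuous range $x > \log 2$, which recombines into the claimed $|Q_S[\Delta_S]| \le \sqrt{2(\E[V_S(\theta)]+Q_S[V_S])(\KL(Q_S\|Q^0) + 2x)}$.

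The main obstacle is conceptual rather than computational: it lies in recognizing that the correct mixing distribution on $\lambda$ is a symmetric Gaussian, because this is what converts the ``linear'' PAC-Bayes inequality $\lambda Q_S[\Delta_S] - \tfrac{\lambda^2}{2} Q_S[V_S] \le \KL(Q_S \| Q^0) + \log(1/\delta)$ (valid for each fixed $\lambda$ but not for a data-dependent one) into a self-normalized statement where $\Delta$ and $V$ appear together under a single square root. The subsequent joint-Jensen step, which rests on the joint convexity of $(\delta, v) \mapsto \delta^2/(v+y)$, is what lets us replace the $Q_S$-averages of $\Delta_S^2$ and $V_S$ by $Q_S[\Delta_S]^2$ and $Q_S[V_S]$, respectively --- otherwise the bound would not be in the closed form demanded by the theorem.
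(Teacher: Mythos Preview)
Your argument is correct but takes a genuinely different route from the paper's. The paper applies the Donsker--Varadhan change of measure \emph{first}, to the linear exponent $\lambda\Delta_S(\theta)-\tfrac{\lambda^2}{2}V_S(\theta)$ conditional on $S$, obtaining
\[
\lambda\,Q_S[\Delta_S]-\tfrac{\lambda^2}{2}\,Q_S[V_S]\le \KL(Q_S\|Q^0)+\log \E^0\!\big[e^{\lambda\Delta_S(\theta)-\frac{\lambda^2}{2}V_S(\theta)}\,\big|\,S\big],
\]
and only \emph{then} exponentiates, takes $\E_S$, and mixes over $\lambda$ with a Gaussian. Because the $Q_S$-average is taken while the exponent is still linear, the posterior quantities $Q_S[\Delta_S]$ and $Q_S[V_S]$ appear directly, and no joint-convexity Jensen step is needed. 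You reverse the order: you mix over $\lambda$ first, producing the self-normalized ratio $\Delta^2/(V+y)$ \emph{for each} $\theta$, and only afterwards apply Donsker--Varadhan; this forces the additional Jensen step via the joint convexity of $(\delta,v)\mapsto \delta^2/(v+y)$ and the concavity of $v\mapsto\log(1+v/y)$, both of which indeed go in the right direction. For part~(i) the contrast is sharper: the paper proves an exponential-moment bound (\cref{lem:pac_bayes_mgf}(i)) via a Cauchy--Schwarz splitting and the auxiliary subgaussianity claim, which bakes the sum $\E[V_S(\theta)]+Q_S[V_S]$ into the denominator directly; you instead specialize $y=\E[V_S(\theta)]$ and spend a second Markov event on $Q_S[V_S]\le e^x\E[V_S(\theta)]$, absorbing the residual $\tfrac12\log(1+e^x)\le x$ into the ``$+2x$''. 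Your route is more elementary and transparent, at the cost of not yielding the intermediate exponential-moment inequalities of \cref{lem:pac_bayes_mgf}; the paper's route is slicker but leans on a less obvious Cauchy--Schwarz trick. One small slip: your justification ``since $Q^0$ is fixed a priori'' is beside the point---$\E[V_S(\theta)]$ is the total expectation under $P_n\otimes Q$, not $Q^0$---but it is nonetheless a deterministic constant, so the specialization is valid.
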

The statement of this theorem uses the language of probability kernels for representing data-dependent distributions (cf. \citet{rivasplata2020}).

In the remaining of this note, we switch back to the usual notation in terms of conditional expectations: $Q_S[\Delta_S] = \E[\Delta_S(\theta) \,|\, S]$ and $Q_S[V_S] = \E[V_S(\theta) \,|\, S]$. 
Also recall that 
$\E[V_S(\theta)] = \E[\E[V_S(\theta) \,|\, S]] = P_n[Q_S[V_S]]$ 
is the total expectation.

The proof of \cref{thm:pac_bayes} is based on the following lemma.

\begin{lemma}
  \label[lemma]{lem:pac_bayes_mgf}
Under the same conditions as in \cref{thm:pac_bayes}. \\[1mm] 
(i) For all $x \geq 0$, 
\begin{equation}
  \label{eq:pac_bayes_mix_2}
  \E\br{
    \exp\cbr{
      x \sqrt{
      \left(
      \frac{\E[\Delta_S(\theta) \,|\, S]^2}{\E[V_S(\theta)] + \E[V_S(\theta) \,|\, S]} - 2 \KL\pr{Q_S \,\Vert\, Q^0}
      \right)_+
      }
    }
  }
  \leq 2 e^{x^2}\,.
\end{equation}
(ii) For any $y > 0$, we have
  \begin{equation}
    \label{eq:pac_bayes_mix_1}
    \E\br{\frac{y}{\sqrt{y^2 + \E[V_S(\theta) \,|\, S]}} \, \exp\cbr{\frac{\E[\Delta_S(\theta) \,|\, S]^2}{2 (y^2 + \E[V_S(\theta) \,|\, S])} - \KL\pr{Q_S \,\Vert\, Q^0}}}
  \leq
  1\,.
  \end{equation}
\end{lemma}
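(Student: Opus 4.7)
The plan is to first derive a PAC-Bayes MGF inequality parametrised by a free scale $\lambda \in \R$, and then extract parts (ii) and (i) by averaging over $\lambda$ (a Gaussian prior for (ii), and a careful choice of scale for (i)). By \cref{lem:canonical}, the pair $(\Delta_S(\theta), \sqrt{V_S(\theta)})$ is canonical for each fixed $\theta$, so for every $\lambda \in \R$, $\E[\exp(\lambda \Delta_S(\theta) - (\lambda^2/2) V_S(\theta))] \le 1$. Integrating against $Q^0(d\theta)$, using Fubini, and then applying the Donsker--Varadhan variational inequality $\int e^h\, dQ^0 \ge \exp\!\pr{\int h\, dQ_S - \KL(Q_S \| Q^0)}$ pointwise in $S$ with $h(\theta) = \lambda \Delta_S(\theta) - (\lambda^2/2) V_S(\theta)$ produces the key inequality
\[
  \E\br{\exp\pr{\lambda\, Q_S[\Delta_S] - \tfrac{\lambda^2}{2}\, Q_S[V_S] - \KL(Q_S \| Q^0)}} \le 1, \qquad (\star)
\]
valid for every fixed $\lambda \in \R$.

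For part (ii), I take $\lambda$ to be independent of $S$ and Gaussian with mean $0$ and variance $1/y^2$, and integrate $(\star)$ over $\lambda$. The $\KL$ term does not depend on $\lambda$ and hence factors out; Fubini together with the standard Gaussian identity
\[
  \E_\lambda\br{\exp\pr{\lambda a - \tfrac{\lambda^2}{2} b}} = \frac{y}{\sqrt{y^2+b}}\, \exp\!\pr{\frac{a^2}{2(y^2+b)}},
\]
applied at $a = Q_S[\Delta_S]$ and $b = Q_S[V_S]$, produces \cref{eq:pac_bayes_mix_1} directly.

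For part (i), I specialise (ii) to the deterministic choice $y^2 = \E[V_S(\theta)]$. Writing $M = \E[V_S(\theta)]$, $Y = Q_S[V_S]$, $\phi = \sqrt{M/(M+Y)}$, and $W = Q_S[\Delta_S]^2/(M+Y) - 2\KL(Q_S\|Q^0)$, part (ii) then reads $\E[\phi\, e^{W/2}] \le 1$. Two simple facts finish the argument: $\phi \le 1$ pointwise, and $\E[\phi^{-2}] = 1 + \E[Y]/M = 2$ (so $\E[\phi^{-1}] \le \sqrt{2}$ by Jensen). Using the elementary bound $e^{W_+/2} \le 1 + e^{W/2}$ one gets
\[
  \E[\phi\, e^{W_+/2}] \le \E[\phi] + \E[\phi\, e^{W/2}] \le 1 + 1 = 2;
\]
Cauchy--Schwarz then yields $\E[e^{W_+/4}] \le \sqrt{\E[\phi^{-1}]}\,\sqrt{\E[\phi\, e^{W_+/2}]} \le 2^{3/4}$; and the AM--GM bound $x\sqrt{W_+} \le x^2 + W_+/4$ (for $x \ge 0$) finally gives $\E[\exp(x\sqrt{W_+})] \le 2^{3/4}\, e^{x^2} \le 2\, e^{x^2}$, which is \cref{eq:pac_bayes_mix_2}.

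The only really delicate step is the transition from (ii) to (i): one must recognise that the natural distribution-dependent (yet data-free) scale to plug into (ii) is $y^2 = \E[V_S(\theta)]$, which both produces the exact denominator $\E[V_S(\theta)] + \E[V_S(\theta)|S]$ appearing in \cref{eq:pac_bayes_mix_2} and forces $\E[\phi^{-2}] = 2$, the clean constant that makes the Cauchy--Schwarz bookkeeping work out. The remaining ingredients---positive-part splitting and AM--GM---are entirely routine.
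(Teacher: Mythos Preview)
Your proof is correct and follows essentially the same route as the paper: derive $(\star)$ from the canonical-pair property plus Donsker--Varadhan, integrate against a Gaussian in $\lambda$ for (ii), then for (i) specialise $y^2=\E[V_S(\theta)]$, use Cauchy--Schwarz to split off the factor $\phi$, handle the positive part by indicator/additive splitting, and finish with AM--GM. Your bookkeeping is in fact slightly tighter than the paper's (you bound $\E[\phi^{-1}]$ via $\sqrt{\E[\phi^{-2}]}=\sqrt{2}$ rather than via subadditivity of $\sqrt{\cdot}$, which yields the constant $2^{3/4}$ instead of $2$).
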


\begin{proof}[Proof of \cref{lem:pac_bayes_mgf}]
For convenience, we start with the proof of \cref{eq:pac_bayes_mix_1}.
Recall the following change of measure, which is the basis of the PAC-Bayesian analysis:
Let $\pi$ and $\pi^0$ be probability measures on $\Theta$, and let the induced expectation operators be $\E$ and $\E^0$, respectively.
Let $X$ be a $\Theta$-valued random variable.
Then, for any measurable function $f \,:\, \Theta \to \R$ we have
    \[
      \E[f(X)] \leq \KL(\pi\,\Vert\,\pi^0) + \log \E^0\br{e^{f(X)}}\,.
    \]
Below we use this with $\pi = Q_S$, $\pi^0 = Q^0$, 
and $f_S(\theta) = \lambda \Delta_S(\theta) - \frac{\lambda^2}{2} V_S(\theta)$. 

Let $\E$ and $\E^0$ be the expectation 
with respect to $P_n \otimes Q$ and $P_n \otimes Q^0$, respectively.
Conditioning on the random sample $S$ we have:
\begin{align*}
  \E\br{ \lambda \Delta_S(\theta) - \frac{\lambda^2}{2} V_S(\theta) \,\middle|\, S}
  &\leq
    \KL\pr{Q_S \,\Vert\, Q^0}
    +
    \log \E^0\br{ e^{\lambda \Delta_S(\theta) - \frac{\lambda^2}{2} V_S(\theta)} \,\middle|\, S}\,.
\end{align*}
Subtracting the KL term, and taking exponential on both sides gives
\begin{align*}
  e^{\E\br{ \lambda \Delta_S(\theta) - \frac{\lambda^2}{2} V_S(\theta) \,\middle|\, S} - \KL\pr{Q_S \,\Vert\, Q^0}}
  &\leq
    \E^0\br{ e^{\lambda \Delta_S(\theta) - \frac{\lambda^2}{2} V_S(\theta)} \,\middle|\, S}\,.
\end{align*}
Then, taking expectation over the random sample $S$ on both sides, and keeping in mind that $(\Delta_S(\theta), \sqrt{V_S(\theta)})$ is a canonical pair for any fixed $\theta$, we have
\begin{align*}
  \E\br{e^{\E\br{ \lambda \Delta_S(\theta) - \frac{\lambda^2}{2} V_S(\theta) \,\middle|\, S} - \KL\pr{Q_S \,\Vert\, Q^0}}}
  &\leq
    \E^0\br{ e^{\lambda \Delta_S(\theta) - \frac{\lambda^2}{2} V_S(\theta)}} \\
  &=
    \E^0\br{ \E^0\br{ e^{\lambda \Delta_S(\theta) - \frac{\lambda^2}{2} V_S(\theta)} \, \middle| \, \theta } }
  \leq
    1\,,
\end{align*}
The equality is by swapping the order of expectation, which is possible since $Q^0$ is a data-free distribution (cf. \cite{rivasplata2020}).
Next, multiplying both sides by $e^{-\lambda^2 y^2/2} y / \sqrt{2 \pi}$ for some fixed $y > 0$, integrating with respect to $\lambda \in \R$, and applying Fubini's theorem, gives%
\footnote{
This is inspired by the proof of \cite[Theorem 12.4]{delapena2009}, which uses the method of mixtures with a Gaussian distribution. 
%see \cite[Section 12.2]{delapena2009}.
}
\begin{align*}  
  \E\br{
  e^{- \KL\pr{Q_S \,\Vert\, Q^0}}
  \frac{y}{\sqrt{2 \pi}} \int_{-\infty}^{\infty} e^{
  \lambda \E\br{\Delta_S(\theta) \,|\, S } - \frac{\lambda^2}{2} \E\br{V_S(\theta) \,\middle|\, S} - \frac{\lambda^2}{2} y^2
  } \diff \lambda
  }
  \leq 1\,.
\end{align*}
Carrying out the Gaussian integration we arrive at
\begin{equation*}
%   \label{eq:integrated_mixture}
  \E\br{\frac{y}{\sqrt{y^2 + \E[V_S(\theta) \,|\, S]}} \, \exp\cbr{\frac{\E[\Delta_S(\theta) \,|\, S]^2}{2 (y^2 + \E[V_S(\theta) \,|\, S])} - \KL\pr{Q_S \,\Vert\, Q^0}}}
  \leq
  1\,,
\end{equation*}
which finishes the proof of~\cref{eq:pac_bayes_mix_1}.

For the other part of the lemma, we consider the following:

\begin{claim}
\label[claim]{lem:subG_from_exp}
Let $U$ be a non-negative random variable, and for $\alpha>0$ define $C(\alpha) =\E\br{\exp(\alpha U^2)}$. Then, for any $x\ge 0$,
$\E\br{\exp(x U )} \le C(\alpha) e^{x^2/4\alpha}$.
\end{claim}

The proof of this claim is as follows.
Fix $\alpha>0$ and $x \geq 0$.
Using the inequality $ab \leq (a^2 + b^2)/2$ with $a = x/\sqrt{2 \alpha}$ and $b=\sqrt{2\alpha}U$ we have
\begin{align*}
x U = 
\frac{x}{\sqrt{2\alpha}} \sqrt{2\alpha}U \le \frac{x^2}{4\alpha} + \alpha U^2\,.
\end{align*}
Then take exponential on both sides, and take expectations.

Next, we see the proof of part (i) of the lemma.

Consider the random variable
\[
U=\sqrt{
\left(\frac{\E[\Delta_S(\theta) \,|\, S]^2}{\E[V_S(\theta)] + \E[V_S(\theta) \,|\, S]} - 2 \KL\pr{Q_S \,\Vert\, Q^0}\right)_+
}
\]
%
% \[
% U=\sqrt{
% \left(\E[\Delta_S(\theta) \,|\, S]^2 / \pr{\E[V_S(\theta)] + \E[V_S(\theta) \,|\, S]} - 2 \KL\pr{Q_S \,\Vert\, Q^0}\right)_+
% }
% \]
and notice that \cref{eq:pac_bayes_mix_2} follows from the claim with $\alpha = 1/4$, provided that we show that $C(1/4) = \E\br{\exp\pr{U^2/4}} \le 2$.
For this, consider an arbitrary $y>0$,
and consider the abbreviations 
% $A=y/\sqrt{y^2 + \E[V_S(\theta) \,|\, S]}$,
% $B=\E[\Delta_S(\theta) \,|\, S]^2 / (2 y^2 + 2 \E[V_S(\theta) \,|\, S]) - \KL\pr{Q_S \,\Vert\, Q^0}$
%
% $A=\frac{y}{\sqrt{y^2 + \E[V_S(\theta) \,|\, S]}}$,
% $B=\frac{\E[\Delta_S(\theta) \,|\, S]^2}{2 (y^2 + \E[V_S(\theta) \,|\, S])} - \KL\pr{Q_S \,\Vert\, Q^0}$
%
\[
A=\frac{y}{\sqrt{y^2 + \E[V_S(\theta) \,|\, S]}}\,,
\hspace{9mm}
B=\frac{\E[\Delta_S(\theta) \,|\, S]^2}{2 (y^2 + \E[V_S(\theta) \,|\, S])} - \KL\pr{Q_S \,\Vert\, Q^0}\,.
\]
We need to upper-bound $\E\br{\exp\pr{U^2/4}} = \E\br{\exp(\pr{B}_+/2)}$.
Keeping in mind that $A>0$ (in fact, $A \in (0,1]$),
by Cauchy-Schwarz,
\begin{align*}
\E\br{\exp(\pr{B}_+/2)}
  &=\E\br{\exp\pr{\pr{B}_+/2} A^{1/2} A^{-1/2} } \nonumber \\
  &\le
  \sqrt{\E\br{A \exp\pr{\pr{B}_+} }} \, \sqrt{ \E\br{ A^{-1} } }  \,.
%    \E\br{\exp\pr{\pr{\frac{\E[\Delta_S(\theta) \,|\, S]^2}{4 (y^2 + \E[V_S(\theta) \,|\, S])} - \frac{1}{2} \KL\pr{Q_S \,\Vert\, Q^0}}_+}
%  \sqrt[4]{\frac{y^2}{y^2 + \E[V_S(\theta) \,|\, S]}}
%  \sqrt[4]{\frac{y^2 + \E[V_S(\theta) \,|\, S]}{y^2}}
%  } \nonumber\\
%  &\leq
%    \sqrt{\E\br{\frac{y}{\sqrt{y^2 + \E[V_S(\theta) \,|\, S]}} \, \exp\pr{\pr{\frac{\E[\Delta_S(\theta) \,|\, S]^2}{2 (y^2 + \E[V_S(\theta) \,|\, S])} - \KL\pr{Q_S \,\Vert\, Q^0}}_+}}}
%    \sqrt{\E\br{\sqrt{\frac{y^2 + \E[V_S(\theta) \,|\, S]}{y^2}}}}
    %  \label{eq:proving_pac_bayes_mix_2:1}
\end{align*}
Observe that $\E[A \exp(B)] \leq 1$ by~\cref{eq:pac_bayes_mix_1},
and $A \in (0,1]$.
Now, we have
\begin{align*}
  &\sqrt{\E\br{A \exp\pr{(B)_+}} \E\br{\frac{1}{A}}} \\
  &\hspace*{5mm}=
    \sqrt{\Big(
    \E\br{A \, \mathds{1}\{B \geq 0\} \exp\pr{B}} + \E\br{A \, \mathds{1}\{B < 0\}}
    \Big)
    \E\br{\frac{1}{A}}
    }
  %&\leq
  %  \sqrt{\pr{\E\br{A \exp\pr{B}} + \E\br{A} } \E\br{\frac{1}{A}}}
  \leq
    \sqrt{\E\br{\frac{2}{A}}}\,.
\end{align*}
Finally, by subadditivity of the square root function and Jensen's inequality,
\[
  \sqrt{\E\br{\frac{2}{A}}}
  =
    \sqrt{2 \E\br{\sqrt{\frac{y^2 + \E[V_S(\theta) \,|\, S]}{y^2}}}}
  \leq
    \sqrt{2 + 2 \frac{\sqrt{\E[V_S(\theta)]}}{y}}
  \leq
    2\,,
\]
where the last inequality is by taking any $y \geq \sqrt{\E[V_S(\theta)]}$.
Thus, $C(1/4) \leq 2$ for the chosen $U$.
Applying \cref{lem:subG_from_exp} with $\alpha = 1/4$ completes the proof.
\end{proof}

To complete the argument, the proof of \cref{thm:pac_bayes} is given next.

\begin{proof}[Proof of \cref{thm:pac_bayes}]
Applying Chernoff's bounding technique with~\cref{eq:pac_bayes_mix_2} gives
\begin{align*}
  \P\pr{
  \sqrt{\pr{\frac{\E[\Delta_S(\theta) \,|\, S]^2}{\E[V_S(\theta)] + \E[V_S(\theta) \,|\, S]} - 2 \KL\pr{Q_S \,\Vert\, Q^0}}_+}
  \geq t
  }
  &\leq
  2 \inf_{x \geq 0} e^{x^2 - t x}\,.
%   = 2 e^{-\frac{t^2}{4}}
\end{align*}
The infimum is $e^{-t^2/4}$.
Thus, with probability at least $1-2 e^{-x}$ we have
\begin{align*}
  \pr{\frac{\E[\Delta_S(\theta) \,|\, S]^2}{\E[V_S(\theta)] + \E[V_S(\theta) \,|\, S]} - 2 \KL\pr{Q_S \,\Vert\, Q^0}}_+
  \leq
  4 x\,.
\end{align*}
With some algebra, this event implies
\begin{align*}
  |\E[\Delta_S(\theta) \,|\, S]|
  \leq
  \sqrt{(\E[V_S(\theta)] + \E[V_S(\theta) \,|\, S]) \pr{2 \KL\pr{Q_S \,\Vert\, Q^0} + 4 x}}\,.
\end{align*}
The last display is equivalent to~\cref{eq:pac_bayes_mix_3}.
Hence \cref{thm:pac_bayes}(i) is proved.

Next, observe that for any $y > 0$ and any $t \geq \sqrt{2}$,
\begin{align*}
\MoveEqLeft
  \P\pr{ \frac{\E[\Delta_S(\theta) \,|\, S]^2}{2 (y^2 + \E[V_S(\theta) \,|\, S])} - \KL\pr{Q_S \,\Vert\, Q^0} \geq \frac{t^2}{2} \br{1 + \frac{1}{2} \log\pr{1 + \frac{\E[V_S(\theta) \,|\, S]}{y^2}}} }\\
  &\hspace*{-5mm}\leq
    \P\pr{ \frac{\E[\Delta_S(\theta) \,|\, S]^2}{2 (y^2 + \E[V_S(\theta) \,|\, S])} - \KL\pr{Q_S \,\Vert\, Q^0} \geq \frac{t^2}{2} + \frac{1}{2} \log\pr{1 + \frac{\E[V_S(\theta) \,|\, S]}{y^2}} }\\
  &\hspace*{-5mm}=
    \P\pr{ \frac{\E[\Delta_S(\theta) \,|\, S]^2}{2 (y^2 + \E[V_S(\theta) \,|\, S])} - \KL\pr{Q_S \,\Vert\, Q^0} - \frac{1}{2} \log\pr{1 + \frac{\E[V_S(\theta) \,|\, S]}{y^2}} \geq \frac{t^2}{2} }\\
  &\hspace*{-5mm}\leq
    \E\br{\sqrt{\frac{y^2}{\E[V_S(\theta) \,|\, S] + y^2}} \, \exp\cbr{\frac{\E[\Delta_S(\theta) \,|\, S]^2}{2 (y^2 + \E[V_S(\theta) \,|\, S])} - \KL(Q_S\,\Vert\,Q^0)} } \, e^{-\frac{t^2}{2}}\\
  &\hspace*{-5mm}\leq
    e^{-\frac{t^2}{2}}\,,
\end{align*}
where the last two inequalities follow from Markov's inequality and~\cref{eq:pac_bayes_mix_1}.
This implies that for all $x \geq 1$, 
with probability at least $1-e^{-x}$, one has
\begin{align*}
  \frac{\E[\Delta_S(\theta) \,|\, S]^2}{2 (y^2 + \E[V_S(\theta) \,|\, S])} - \KL\pr{Q_S \,\Vert\, Q^0}
  \leq
  x\pr{1 + \frac{1}{2} \log\pr{1 + \frac{\E[V_S(\theta) \,|\, S]}{y^2}}}\,.
\end{align*}
Notice that $y^2$ may be replaced with $y$, since $y>0$ is a free variable.
Doing this replacement, and rearranging the terms, 
% and keeping in mind the agreed notation $Q_S[\Delta_S] = \E[\Delta_S(\theta) \,|\, S]$ and
% $Q_S[V_S] = \E[V_S(\theta) \,|\, S]$,
we get the equivalent of~\cref{eq:pac_bayes_mix_4}.
Hence \cref{thm:pac_bayes}(ii) is proved.
% \begin{align*}
%   |\E[\Delta_S(\theta) \,|\, S]|
%   \leq
%   \sqrt{
%   2 \pr{y^2 + \E[V_S(\theta) \,|\, S]}
%   \pr{\KL\pr{Q_S \,\Vert\, Q^0} + x \pr{1 + \frac{1}{2} \log\pr{1 + \frac{\E[V_S(\theta) \,|\, S]}{y^2} }}}
%   }\,.
% \end{align*}
\end{proof}

\paragraph{Closing remarks.}
\citeauthor{kuzborskij2019efron} deserve fair credit for showing that the pair $(\Delta,\sqrt{V})$ meets \citet{delapena2009}'s `canonical condition' (\cref{lem:canonical}), which enabled powerful tools for bounding exponential moments.
Of course, this was possible with their variance estimator $V = \VKS$.
Apart from that, the main part of the work of \citeauthor{kuzborskij2019efron} is in the proofs of \cref{lem:pac_bayes_mgf} and \cref{thm:pac_bayes}, which cleverly use the techniques of \citeauthor{delapena2009}.
In the next iteration of this note (provided that enough readers cared about it) I intend to add discussions about \cref{thm:main} \& \cref{thm:pac_bayes}, and applications.

\bibliography{biblia}
\nocite{*} % Insert publications even if they are not cited

\end{document}